\theoremstyle{thmstyleone}%
\newtheorem{theorem}{Theorem}[section]
\newtheorem{proposition}[theorem]{Proposition}%
\newtheorem{lemma}[theorem]{Lemma}
\newtheorem{corollary}[theorem]{Corollary}
\theoremstyle{thmstyletwo}%
\theoremstyle{thmstylethree}%
\newtheorem{definition}[theorem]{Definition}%
\newtheorem{example}[theorem]{Example}%
\newtheorem{remark}[theorem]{Remark}%
\newcommand{\CC}{\mathbb C}
\newcommand{\NN}{\mathbb N}
\newcommand{\RR}{\mathbb R}
\newcommand{\cC}{\mathcal{C}}
\newcommand{\cF}{\mathcal{F}}
\newcommand{\cK}{\mathcal{K}}
\newcommand{\cO}{\mathcal{O}}
\newcommand{\im}{\operatorname{Im\,}}
\newcommand{\re}{\operatorname{Re\,}}
\newcommand{\Ker}{\textup{\textbf{Ker}}}
\newcommand{\preker}{\operatorname{pre-ker}}
\begin{document}

\title[A Selection Theorem for pointed Domains]{A Selection Theorem for 
the Carath\'eodory Kernel Convergence of Pointed Domains}


\author[1]{\fnm{Kang-Tae} \sur{Kim}}\email{kimkt@postech.ac.kr}

\author[2]{\fnm{Thomas} \sur{Pawlaschyk}}\email{pawlaschyk@uni-wuppertal.de}



\affil[1]{\orgdiv{Department of Mathematics}, \orgname{Pohang University of Science and Technology}, \orgaddress{\city{Pohang City}, \postcode{37673}, \country{South Korea}}}


\affil[2]{\orgdiv{School of Mathematics and Natural Sciences}, \orgname{University of Wuppertal}, \orgaddress{\street{Gau\ss{}stra\ss{}e 20}, \city{Wuppertal}, \postcode{42119}, \country{Germany}}}



\abstract{We present a selection theorem for domains in $\CC^n$, $n\ge 1$, which 
states that any tamed sequence of pointed connected open subsets 
admits a subsequence convergent to its own kernel in the sense of 
Carath\'eodory. Not only is this analogous to the well-known Blaschke 
selection theorem for compact convex sets, but it fits better in the study of 
normal families of biholomorphic maps with varying domains and ranges.}

\keywords{Set limits, set convergence, normal families, holomorphic maps}


\pacs[MSC Classification]{32A10}

\maketitle

\section*{Acknowledgement} We appreciate the hospitality of 
Pusan National University and Gyeongsang National University for their 
hospitality during the research visits in March 2024. An integral part of 
this work was formed then and there. This work was supported in part 
by Grant No.\ SSTF-BA2201-01 from Samsung Sci. and Tech. 
Foundation (South Korea) and by NRF Grant 
No.\ 2023R1A2C1007227 (South Korea), whose PI are 
Young-Jun Choi (PNU) and Kang-Hyurk Lee (GNU).  We also thank 
Tobias Harz and Jae-Cheon Joo for many valuable discussions. We thank the anonymous referee 
for comments improving the presentation of our paper. This version of the article has been accepted for publication in The Journal of Geometric Analysis, after peer review but is not the final published version and does not reflect post-acceptance improvements, or any corrections. The final version is available online at: \href{https://doi.org/10.1007/s12220-025-02077-2}{ https://doi.org/10.1007/s12220-025-02077-2}.

\section{Introduction}

The celebrated Blaschke selection theorem \cite{Blaschke} states that the
space of nonempty compact convex subsets of a Banach space is 
Cauchy-complete in the Hausdorff distance~\cite{Hausdorff}, which implies 
that any bounded sequence of nonempty compact convex subsets of a 
Banach space contains a convergent subsequence. This theorem has been 
generalized to the broader collection of nonempty compact subsets of 
a Banach space~\cite{Price}.
\smallskip

On the other hand, for the conformal maps from the open unit disc into
$\CC$, each of which assigns the origin to a fixed point, the convergence 
of the image domains in the complex plane requires another concept of 
convergence of sets, suggested by Carath\'eodory~\cite{Cara1912}, 
nowadays known as the \emph{Carath\'eodory kernel convergence}.  
This has turned out to be the correct and optimal concept for the 
study of sequences of general connected open sets (i.e., domains).   
\smallskip

The structure of this paper centers around Theorem~\ref{selection_thm}, 
a selection theorem for the Carath\'eodory kernel convergence of pointed 
domains, as well as Theorem~\ref{GCTh}, which may be regarded as 
a generalized version of the Carath\'eodory 
kernel theorem to all dimensions.

\section{The Carath\'eodory kernel convergence}
\label{section-kernel-convergence}

By a \emph{pointed set} we mean a pair $(G,p)$ consisting of a set $G$ and 
a point $p \in G$. Clearly, the set operators, such as inclusion, union and intersection, naturally transfer to point sets with common point, e.g., $(A,p) \subseteq (B,q)$ if $p=q$ and $A \subseteq B$, etc.

We call it a \emph{pointed domain} if the set $G$ is
a connected open set. We denote by $A^\circ$ the \emph{interior} of the 
set $A$ and by $\operatorname{Conn}_q( A)$ the 
\emph{connected component} of~$A$ containing~$q$. 

\begin{definition} A sequence $\{(G_j, p_j)\}_{j\geq1}$ of pointed 
domains in $\CC^n$ is said to be \emph{tamed} at $\hat p$, if the following 
conditions hold:
\begin{enumerate}
\item $\lim\limits_{j\to\infty} p_j = \hat p$ for some $\hat p \in \CC^n$.
\item There is an open neighborhood of $\hat p$ contained in 
$\bigcap_{j \ge k} G_j$ for some~$k \ge 1$. 
\end{enumerate}
\end{definition}

\begin{definition} \label{kernel}
Given a sequence $\{(G_j, p_j)\}_{j\geq1}$ of pointed domains tamed at 
$\hat p \in \CC^n$, its \emph{Carath\'eodory kernel} (or, its 
\emph{kernel}, for short) is the pointed set defined by
\[
\Ker_{\hat p} \{(G_j, p_j)\}_{j\geq1} 
:= 
\Big( \bigcup\limits_{k\ge 1}  \operatorname{Conn}_{\hat p} 
\Big(\big( \bigcap_{j \ge k} G_j \big)^\circ\Big), ~\hat{p}\Big).
\]
\end{definition}

Next, we define the convergence of pointed domains in the sense of Carath\'eodory~\cite{Cara1912}.

\begin{definition} \label{kernel-convergence}
A sequence $\{(G_j, p_j)\}_{j\geq1}$ of pointed domains tamed at $\hat p \in \CC^n$ is said to
\emph{converge (to its kernel)} if for each subsequence $\{(G_{j_k}, p_{j_k})\}_{k\geq1}$ the kernel is the same, i.e., 
\[
\Ker_{\hat p} \{(G_j, p_j)\}_{j\geq1} 
= \Ker_{\hat p} \{(G_{j_k}, p_{j_k})\}_{k\geq1}.
\]
\end{definition}

\begin{example} \label{Pac-man_example}
Let $\Delta = \{z \in \CC \colon |z|<1\}$ be the open unit disc in $\CC$.  
Then for each $j \geq 1$ put 
\[
G_j := \Delta \setminus \big\{ x+iy \in \CC\colon  -1<x<1-\tfrac1j, 
\ y=0 \big\}
\]   
and let $\hat p := p_j:=\frac{\sqrt{-1}}2 =\frac i2$.  
By the Riemann mapping theorem there exists, for 
every $j>1$, a biholomorphism $f_j$ mapping $\Delta$ onto $G_j$ with 
$f_j (0)=\hat p$ and $f_j '(0)>0$. The sequence of maps $\{f_j\}_{j>1}$ 
contains a subsequence that converges \emph{uniformly on compact subsets} 
(or alternatively, \emph{locally uniformly}) by Montel's theorem.  So
we may assume, by taking a subsequence, that this sequence converges
in the same manner to $\hat f\colon \Delta \to \CC$.  Let us now
find $\hat f (\Delta)$. In the sense of the Hausdorff convergence, it may be 
natural to expect that $\{G_j\}_{j>1}=\{f_j (\Delta)\}_{j>1}$ converges 
to $\Delta$ by, taking first the closure of the $G_j$'s, taking the limit, 
which is the closed unit disc, and finally taking the interior.  
However, it is well known that $\hat f (\Delta)$ is the upper half-disc 
$\Delta^+:=\{x+iy \in \Delta : y > 0\}$. 
In contrast, the convergence of $\mathcal{G}:=\{(G_j, \frac{i}2)\}_{j>1}
= \{(f_j (\Delta), p_j)\}_{j>1}$ in the sense of Carath\'eodory finds 
exactly the correct limit, namely 
$\Ker_{i/2}(\mathcal{G})=(\Delta^+,\frac{i}{2})=(\hat f(\Delta),\hat p)$.
\end{example}

Now let us compare the kernel with another notion of set limit known as the \emph{normal limit}.

\begin{definition} \label{NormalConv}
Given a sequence of pointed domains $\{(G_j, p_j)\}_{j\geq1}$ tamed 
at $\hat p$, a pointed domain 
$(\hat G, \hat p)$ is called its \emph{normal limit}, 
if the following two conditions hold:
\begin{enumerate}
\item 
For every  connected compact subset $K$ of $\hat G$ with 
$\hat p \in K$, there exists $k_0 \ge 1$ such that 
$K$ is contained in $\Big(\bigcap_{j \ge k_0} G_j\Big)^\circ$.
\item If for a compact connected subset $L$ with $\hat{p} \in L$ there 
exists an index $k_1\geq1$ such that $L \subset  
\Big(\bigcap_{j \ge k_1} G_j\Big)^\circ$, then $L$ lies in~$\hat G$.  
\end{enumerate}
\end{definition}

\begin{remark} The concept of such a limit domain was posed earlier in a similar form, 
e.g.~\cite{BedPin} or \cite{GKK2011} (p.~228, Definition~9.2.2.), with 
an indication that the sequence of sets would converge to this limit.  
However, the necessity of taking \emph{connected} compact subsets was 
overlooked in those articles. In fact, Definition~\ref{NormalConv} appears to be closer to the ``\emph{Kern}'' (kernel) introduced by Carath\'eodory in \cite{Cara1912}, but in the same paper, the convergence as in Definition~\ref{kernel-convergence} was required to get results on the convergence of families of conformal maps.
\end{remark}

The relation of the normal limit and the kernel is clarified in the following

\begin{proposition}
A pointed domain $(\hat G, \hat p)$ is the normal limit of the 
sequence $\{(G_j, p_j)\}_{j\geq1}$ of pointed domains tamed at $\hat p$ 
if and only if
$(\hat G, \hat p) = \Ker_{\hat p} \{ (G_j, p_j)\}_{j\geq1}$.
\end{proposition}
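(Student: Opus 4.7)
The plan is to exploit the monotonic structure of the kernel formula. Writing $U_k := \bigl(\bigcap_{j\ge k} G_j\bigr)^\circ$ and $V_k := \operatorname{Conn}_{\hat p}(U_k)$, the taming hypothesis puts $\hat p \in U_k$ for all $k$ beyond some index, and since $U_k \subset U_{k+1}$ one has $V_k \subset V_{k+1}$; consequently the kernel $\cK := \bigcup_{k} V_k$ is an open connected set containing $\hat p$. This identification will let me translate conditions (1) and (2) of Definition~\ref{NormalConv} into statements about the increasing family $\{V_k\}$.

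For the ``only if'' direction, I assume $\hat G = \cK$ and check both conditions. For (1), any compact connected $K \subset \hat G$ with $\hat p \in K$ lies in $\cK$, which is the increasing open union $\bigcup_k V_k$, so compactness yields $K \subset V_{k_0} \subset U_{k_0}$ for some $k_0$. For (2), any compact connected $L$ with $\hat p \in L$ and $L \subset U_{k_1}$ is a connected subset of $U_{k_1}$ meeting $\hat p$, hence it lies in $\operatorname{Conn}_{\hat p}(U_{k_1}) = V_{k_1} \subset \cK = \hat G$.

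For the ``if'' direction, I prove $\hat G = \cK$ by double inclusion, using that an open connected subset of $\CC^n$ is path-connected. To show $\hat G \subset \cK$, take $x \in \hat G$ and join it to $\hat p$ by a continuous path inside $\hat G$; its image is a compact connected set containing $\hat p$, so (1) places it in some $U_{k_0}$, and by connectedness it lies in $V_{k_0} \subset \cK$. Conversely, given $x \in V_k$, a path in the open connected set $V_k$ from $\hat p$ to $x$ has compact connected image contained in $V_k \subset U_k$, so (2) places the image (and therefore $x$) in $\hat G$.

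Nothing in this outline is substantially difficult; the only delicate point is to respect the \emph{connectedness} clause on the test sets in Definition~\ref{NormalConv}, for it is precisely this that makes the passage ``$K \subset U_{k_0}$'' $\Rightarrow$ ``$K \subset V_{k_0}$'' valid, and it is exactly the oversight the authors flagged in the preceding remark. The main care in the write-up, then, will be to ensure that the path-image sets produced are genuinely compact connected sets containing $\hat p$, so that conditions (1) and (2) are legitimately applicable.
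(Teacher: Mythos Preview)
Your argument is correct and follows essentially the same route as the paper's proof, only with more detail: you make explicit the increasing family $V_k = \operatorname{Conn}_{\hat p}(U_k)$ and the path-connectedness used to manufacture compact connected test sets, whereas the paper compresses all of this into a couple of lines. One cosmetic slip: you have the labels ``if'' and ``only if'' reversed --- the statement reads ``$(\hat G,\hat p)$ is the normal limit \emph{if and only if} $(\hat G,\hat p)=\Ker_{\hat p}\{(G_j,p_j)\}$,'' so the ``only if'' direction should start from the normal-limit hypothesis and deduce the kernel identity, not the other way around.
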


\begin{proof}
Assume first that $(\hat G, \hat p)$ is the normal limit of the sequence 
$\{(G_j, p_j)\}_{j\geq1}$. Then condition~(1) in 
Definition~\ref{NormalConv} is equivalent to
\[
\hat p \in K \Subset \operatorname{Conn}_{\hat p} \Big( \Big(\bigcap_{j\ge k}
G_j \Big)^\circ \Big) \text{ for some } k\ge 1.
\]
As a result, $(\hat G,\hat p) \subseteq \Ker_{\hat p} \{(G_j, p_j)\}_{j\geq1}$.
Condition~(2) implies the opposite inclusion.  So 
$(\hat G, \hat p) = \Ker_{\hat p} \{(G_j, p_j)\}_{j\geq1}$. The converse follows from the definition of the Carath\'eodory kernel and by 
the compactness of the involved sets in the definition of the normal limit.
This completes the proof.
\end{proof}

Notice that the existence of the normal limit does not, in general, guarantee the convergence to its kernel.

\begin{example}\label{drunken-ellipsi}
Let
\begin{align*}
G := \{x+iy \in \CC \colon |x|<1,\ |y|<3\}, \\
H := \{x+iy \in \CC \colon |x|<3,\ |y|<1\}.
\end{align*}
Then construct the sequence of domains pointed at the origin such as
\[
G_j := \begin{cases} 
(G, 0), &\text{if } j = \text{odd}, \\
(H, 0), &\text{if } j = \text{even}.
\end{cases}
\]
This is a tamed sequence and its kernel is
\[
\Ker_0 (G_j, 0) = \big(\{ x+iy \in \CC \colon |x|<1, |y|<1\}, 0 \big).
\]
Thus, this is the normal limit.  The sequence, however, does not converge 
to its kernel in the sense of Carath\'eodory, since it contains two constant 
subsequences $\{(G,0)\}$ and $\{(H, 0)\}$, whose kernels are $(G,0)$ and 
$(H,0)$, respectively.  

Consequently, it is evident that the normal limit finds the kernel. but it 
does not necessarily imply the convergence in the sense of Carath\'eodory.
\end{example}

\begin{remark} If we define the normal convergence of a tamed sequence  
by requiring that all subsequences share the same normal limit, then the 
normal convergence is equivalent to the convergence in the sense of 
Carath\'eodory.
\end{remark}

\begin{remark}

The original Carath\'eodory kernel~\cite{Cara1912} for a sequence of 
pointed domains in $\CC$ has been defined even when the sequence is 
not tamed, i.e., when $\hat p$ is not an interior point of 
$\bigcap_{j \geq k} G_j$ for any 
$k \geq 1$.  In such a case, the kernel is defined to be simply 
the singleton set~$\{\hat p\}$. This completes conceptually the definition 
of the kernel, since the degenerate case corresponds to a sequence of 
images $G_j=f_j(D)$ of a \textit{compactly divergent} sequence of holomorphic 
maps $f_j$ defined on a plane domain $D$. This
corresponds to the case that the kernel according to 
Definition 2.2. is the empty set.
\end{remark}

\section{A selection theorem for the kernel convergence}
\label{section-selection-thm}

The present goal is to give a version of the selection theorem for 
tamed sequences of domains with respect to the convergence in the 
sense of Carath\'eodory.

\begin{lemma}[Monotonicity of kernels]\label{monotonicity-kernels}
Let $\sigma = \{(G_j, p_j)\}_{j\geq1}$ be a sequence of pointed domains tamed at $\hat p$ in $\CC^n$. If $\tau$ is a subsequence of~$\sigma$, then their kernels
satisfy
\[
\Ker_{\hat p} (\sigma) \subseteq \Ker_{\hat p} (\tau).
\]
\end{lemma}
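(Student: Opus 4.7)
The plan is to unfold both kernels directly from the definition and exhibit a term-by-term inclusion between the defining unions. Write
\[
\Ker_{\hat p}(\sigma) = \bigcup_{k\ge 1} A_k,
\qquad
A_k := \operatorname{Conn}_{\hat p}\Big(\Big(\bigcap_{j\ge k} G_j\Big)^\circ\Big),
\]
and, writing $\tau = \{(G_{j_m}, p_{j_m})\}_{m\ge 1}$,
\[
\Ker_{\hat p}(\tau) = \bigcup_{m\ge 1} B_m,
\qquad
B_m := \operatorname{Conn}_{\hat p}\Big(\Big(\bigcap_{\ell \ge m} G_{j_\ell}\Big)^\circ\Big).
\]
Both expressions make sense because the tamedness of $\sigma$ at $\hat p$ is inherited by $\tau$, so an open neighborhood of $\hat p$ sits inside $\bigcap_{j\ge k}G_j$ and inside $\bigcap_{\ell\ge m}G_{j_\ell}$ for all sufficiently large $k$ and $m$.

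The key step is to show that for every $k\ge 1$ there exists $m\ge 1$ with $A_k \subseteq B_m$. Given $k$, I choose $m$ so large that $j_m \ge k$. For every $\ell \ge m$ we then have $j_\ell \ge j_m \ge k$, so each set $G_{j_\ell}$ appears among the sets $G_j$ with $j\ge k$. Hence
\[
\bigcap_{j\ge k} G_j \;\subseteq\; \bigcap_{\ell \ge m} G_{j_\ell},
\]
and this inclusion is preserved when taking interiors. Now I invoke the elementary fact that if $U\subseteq V$ are open sets containing $\hat p$, then $\operatorname{Conn}_{\hat p}(U) \subseteq \operatorname{Conn}_{\hat p}(V)$, because $\operatorname{Conn}_{\hat p}(U)$ is a connected subset of $V$ that contains $\hat p$. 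Applying this to $U=(\bigcap_{j\ge k}G_j)^\circ$ and $V=(\bigcap_{\ell\ge m}G_{j_\ell})^\circ$ yields $A_k \subseteq B_m$.

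Finally, taking the union over $k\ge 1$ on the left, and using $B_m \subseteq \Ker_{\hat p}(\tau)$ on the right, gives $\Ker_{\hat p}(\sigma) \subseteq \Ker_{\hat p}(\tau)$, which completes the proof. I do not foresee any real obstacle here: the statement is essentially a monotonicity property of the intersection $\bigcap_{j\ge k}G_j$ under passing to a subsequence, and the only mild subtlety is the bookkeeping of pointed connected components, which is handled by the auxiliary observation on $\operatorname{Conn}_{\hat p}$ above.
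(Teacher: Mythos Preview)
Your proof is correct and is precisely the unfolding of the phrase ``follows directly from the definition of the kernel,'' which is all the paper offers as proof. The paper gives no further details, so your explicit term-by-term inclusion $A_k \subseteq B_m$ (via $j_m \ge k$) together with the monotonicity of interiors and of $\operatorname{Conn}_{\hat p}$ is exactly the intended argument.
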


\begin{proof}
The proof follows directly from the definition of the kernel.
\end{proof}

\begin{theorem}[Selection theorem for domains] \label{selection_thm}
Let $\sigma=\{(G_j, p_j)\}_{j\geq1}$ be a sequence of pointed domains 
 tamed at $\hat p$ in $\CC^n$. Then there exists a subsequence 
$\tau$ convergent to its kernel $\Ker_{\hat p} (\tau) $.
\end{theorem}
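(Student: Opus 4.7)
The plan is to reduce the selection problem to the classical Blaschke selection theorem, applied to the closed complements $F_j := \CC^n \setminus G_j$. The goal is to extract a subsequence along which the $F_j$ converge in the local Hausdorff (Kuratowski) sense to a closed set $F_\infty \subset \CC^n$, and then identify the Carath\'eodory kernel of that subsequence with $\operatorname{Conn}_{\hat p}(\CC^n \setminus F_\infty)$. Since the Hausdorff limit of a sequence is inherited by every subsequence, the kernel is then automatically preserved under any further subsequential passage, which is exactly the definition of convergence to the kernel.

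I would carry out the extraction by a standard diagonal construction. Exhaust $\CC^n$ by closed balls $\overline{B_m}$ of radius $m$ centered at $\hat p$. For each $m$, the traces $F_j \cap \overline{B_m}$ are compact subsets of $\overline{B_m}$ (with the empty-set case handled by first thinning to a subsequence where they are all empty or all nonempty). Blaschke selection provides a sub-subsequence along which these traces converge in Hausdorff distance; diagonalizing in $m$ yields a single subsequence $\tau = \{(G_{j_k}, p_{j_k})\}$ such that $F_{j_k} \cap \overline{B_m}$ converges for every $m$. The limits glue to a closed set $F_\infty \subset \CC^n$, and the taming hypothesis readily implies $\hat p \notin F_\infty$.

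The heart of the argument is the identity
\[
\CC^n \setminus F_\infty \;=\; \bigcup_{k \ge 1} \Bigl( \bigcap_{\ell \ge k} G_{j_\ell} \Bigr)^{\circ}.
\]
For the nontrivial inclusion, if $x \notin F_\infty$, then for some $m$ with $x \in \overline{B_m}$ the point $x$ has positive distance to $F_\infty \cap \overline{B_m}$; Hausdorff convergence of the traces then forces $F_{j_k}$ to stay uniformly away from $x$ for large $k$, so a small open ball about $x$ lies in every $G_{j_\ell}$ for $\ell$ large, placing $x$ in the interior of a tail intersection. The reverse inclusion is immediate. Combined with the elementary observation that a compact connected subset of an increasing union of open sets lies inside one of them, this identity yields $\Ker_{\hat p}(\tau) = \operatorname{Conn}_{\hat p}(\CC^n \setminus F_\infty)$.

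Finally, for any further subsequence $\tau'$ of $\tau$, the complements along $\tau'$ also converge in the local Hausdorff sense to the same $F_\infty$, and the same identification gives $\Ker_{\hat p}(\tau') = \operatorname{Conn}_{\hat p}(\CC^n \setminus F_\infty) = \Ker_{\hat p}(\tau)$, completing the proof. The chief obstacle is the boxed identity above: because the interior of a countable intersection of open sets may be strictly smaller than the intersection, producing a genuine open neighborhood of a point of $\CC^n \setminus F_\infty$ inside a tail intersection cannot be deduced from pointwise eventual membership alone, and uses the full uniform strength of Hausdorff convergence of the $F_{j_k}$ on each compact ball $\overline{B_m}$.
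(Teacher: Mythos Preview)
Your argument is correct and takes a genuinely different route from the paper. The paper proceeds order-theoretically: it partially orders the set $\cK_\sigma$ of kernels of all subsequences by inclusion, invokes the Hausdorff maximal principle to obtain a maximal chain, sets $\hat K$ equal to the union of that chain, and then builds by hand (via an exhaustion $\{Q_m\}$ of $\hat K$ and an inductive choice of indices) a subsequence $\tau$ with $\Ker_{\hat p}(\tau)=\hat K$; maximality and Lemma~\ref{monotonicity-kernels} then force every further subsequence to share this kernel. Your approach instead passes to the closed complements and uses sequential compactness of the Fell/Kuratowski topology (realized concretely through Blaschke selection on balls plus diagonalization), after which the identity $\CC^n\setminus F_\infty=\bigcup_k\bigl(\bigcap_{\ell\ge k}G_{j_\ell}\bigr)^\circ$---which is precisely the statement that the pre-kernel is the complement of the Kuratowski $\limsup$---does all the work, and stability under subsequences is automatic from metric convergence. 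Your method is more constructive (it avoids Zorn's lemma), yields an explicit description of the limiting kernel as $\operatorname{Conn}_{\hat p}(\CC^n\setminus F_\infty)$, and makes direct contact with the classical Blaschke theorem highlighted in the paper's introduction; the paper's argument, by contrast, is intrinsic to the kernel formalism and never leaves the category of domains, at the cost of an appeal to the axiom of choice. One small caution: the gluing of the per-ball Hausdorff limits $L_m$ into a single closed $F_\infty$ has boundary-sphere subtleties, but these evaporate if you simply take $F_\infty:=\limsup_k F_{j_k}$ in the Kuratowski sense and observe that your diagonal extraction forces $\liminf=\limsup$.
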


\begin{proof} 
The hypothesis on the tameness of the set sequence implies that the Carath\'eodory kernel
\[
\Ker_{\hat p} (\sigma) 
= 
\Big( \bigcup\limits_{k\ge 1}  
\operatorname{Conn}_{\hat p} \Big(\big( \bigcap_{j \ge k} 
G_j \big)^\circ\Big), ~\hat p\Big)
\]
is nonempty.
Let $\Sigma_\sigma$ be the set of all subsequences of $\sigma$. 
Denote by 
\[
\cK_\sigma = \{ \Ker_{\hat p}(\gamma) \colon \gamma 
\in \Sigma_\sigma\}.
\]
Equipped with the inclusion relation, the set $\cK_\sigma$ becomes a partially 
ordered set. Notice in passing that $\Ker_{\hat p}(\sigma)$ is the 
minimal element.
\smallskip

Recall the set-theoretical concept of a \emph{chain}, i.e., a 
totally-ordered subset.  
Then the \emph{Hausdorff maximum principle} (or, \emph{Zorn's 
lemma}) states that, in every partially ordered set, every nonempty chain 
admits a \emph{maximal} chain. Now, choose a maximal chain 
$\cC_\sigma$ in $\cK_\sigma$ and let
\[
(\hat K, \hat p) := \bigcup_{(K,\hat p) \in \cC_\sigma} (K, \hat p).
\]
Note that $\hat K$ is open and non-empty due to the tameness of 
$\sigma$ at $\hat p$.
\medskip

\noindent \textbf{Claim.} \emph{There exists a subsequence 
$\tau \in \Sigma_\sigma$ such that $(\hat K,\hat p) = \Ker_{\hat p} (\tau)$.}
\medskip

\noindent To justify this claim, take a sequence $\{Q_m\}_{m\geq1}$ 
of connected compact subsets of $\hat K$ satisfying
\[
\hat{p} \in Q_m \Subset Q_{m+1}^\circ \ \text{for every } m \geq 1, 
\quad \text{and} \quad (\hat K,\hat p)=\Big( \bigcup_{m=1}^\infty Q_m, \hat p \Big).
\]
Let $\NN$ be the set of natural numbers. 
For every $\alpha \in \Sigma_\sigma$, denote by 
$\tilde\alpha \colon \NN \to \NN$ the \emph{index function} 
associated with $\alpha$ such that 
\[
\alpha  = \{(G_{\tilde\alpha (k)}, p_{\tilde\alpha (k)})\}_{k\geq1} \quad 
\text{and} \quad k \le \tilde\alpha (k) < \tilde\alpha (k+1) 
\textrm{ for every } k\in \NN.
\]
Now we construct the sequence 
$\tau=\{(G_{j_\ell}, p_{j_\ell})\}_{\ell \geq1}$ 
by induction. 
\smallskip

Let $\ell=1$. Then, by the construction of $\cC_\sigma$, $\hat K$ and 
$\{Q_m\}_{m\geq1}$, there exists a subsequence 
$\alpha_1 \in \Sigma_\sigma$ with 
$\Ker_{\hat p} (\alpha_1) \in \cC_\sigma$ such that 
\[
(Q_1,\hat p) \subseteq  \Ker_{\hat p} (\alpha_1).
\]
By definition of the Carath\'eodory kernel, there is a natural number 
$m_1 \in \NN$ such that 
\[
Q_1 \subseteq \operatorname{Conn}_{\hat p}
\Big( \big(\bigcap_{j \ge m_1} G_{\widetilde{\alpha_1} (j)}\big)^\circ \Big).
\]
So take $j_1 := \widetilde{\alpha_1}(m_1)$.
\medskip

Assume now that we already found the index 
$j_\ell=\widetilde{\alpha_{\ell}}(m_\ell)$ for an $\ell\geq1$, where $
\alpha_\ell$ is a subsequence of $\sigma$ fulfilling 
$\Ker_{\hat p} (\alpha_\ell) \in \cC_\sigma$ and
\[
Q_\ell \subseteq\operatorname{Conn}_{\hat p} 
\Big( \big(\bigcap_{j \ge m_\ell} 
G_{\widetilde{\alpha_\ell} (j)}\big)^\circ \Big)
\]
for some $m_\ell \geq 1$.

Now for ${\ell+1} \in \NN$, there is 
$\alpha_{\ell+1} \in \Sigma_\sigma$ such that 
$\Ker_{\hat p}(\alpha_{\ell+1}) \in \cC_\sigma$ and 
$(Q_{\ell+1},\hat p) \subset \Ker_{\hat p} (\alpha_{\ell+1})$.
Consequently, there is $n_{\ell+1} \in \NN$ such that 
\[
Q_{\ell+1} \subseteq  \operatorname{Conn}_{\hat p} 
\Big( \big(\bigcap_{j \ge n_{\ell+1}} G_{\widetilde{\alpha_{\ell+1}}(j)}
\big)^\circ \Big)
\]
Since 
\[
\operatorname{Conn}_{\hat p} 
\Big( \big(\bigcap_{j \ge \nu} G_{\tilde\beta (j)}\big)^\circ \Big)
\subseteq
\operatorname{Conn}_{\hat p} 
\Big( \big(\bigcap_{j \ge \mu} G_{\tilde\beta (j)}\big)^\circ \Big),
\]
for any subsequence $\beta  \in \Sigma_\sigma$, and 
$\nu,\mu  \in \NN$ with $\nu<\mu$, we may choose 
$m_{\ell+1} \in \NN$ so that $m_{\ell+1} > n_{\ell+1}$ and 
\[
j_{\ell}=\widetilde{\alpha_\ell} (n_\ell) < \widetilde{\alpha_{\ell+1}}  
(m_{\ell+1}).
\]
So we let $j_{\ell+1} := \widetilde{\alpha_{\ell+1}} (m_{\ell+1})$.  
\smallskip

By induction, we obtain a subsequence
\[
\tau = \{(G_{j_\ell}, p_{j_\ell})\}_{\ell \geq1} \in \Sigma_\sigma
\]
which admits
\[
( \hat K,\hat p) = \Big( \bigcup_{j=1}^\infty Q_j,\hat p \Big) \subseteq \Ker_{\hat p} (\tau).
\]
Hence, $\cC_\sigma  \cup \{\Ker_{\hat p} (\tau)\}$ is a chain
in $\cK_\sigma$.  It also contains $\cC_\sigma$.
The maximality of $\cC_\sigma$ implies that 
$\cC_\sigma  \cup \{\Ker_{\hat p} (\tau)\} = \cC_\sigma$ and 
the definition of $\hat K$ implies that $(\hat K,\hat p) = \Ker_{\hat p} (\tau)$.
This proves the claim.
\medskip

To complete the proof of Theorem~\ref{selection_thm}, we still have
to show that the sequence $\tau$, just constructed, converges to its 
Carath\'eodory kernel $(\hat K,\hat p) = \Ker_{\hat p}(\tau)$. 
Let $\eta$ be an arbitrary subsequence of~$\tau$.  By monotonicity 
(Lemma~\ref{monotonicity-kernels}), the kernel of $\eta$ contains 
the kernel of $\tau$.  Then the maximality of $\Ker_{\hat p}(\tau)$
in $\cK_\sigma$ implies that $\Ker_{\hat p}( \tau ) = \Ker_{\hat p}(\eta)$. 
This shows that every subsequence of~$\tau$ shares the same kernel with
$\tau$. Thus, $\tau$ converges to its kernel. 
This now completes the proof of the selection theorem.
\end{proof}

\section{On Carath\'eodory's kernel theorem in all dimensions}
\label{section-cara-kernel-thm}

For two domains $D$ and $G$ in $\CC^n$, denote by $\cO(D, G)$ the family
of holomorphic maps from $D$ into $G$.  A sequence 
$\{f_j\}_{j \geq 1} \subset \cO(D, G)$ is called \emph{compactly divergent} 
on $D$ if, for any $K \Subset D$ and $L \Subset G$, there exists 
$j_0\geq1$ such that $f_j (K) \cap L = \varnothing$ for every 
$j \geq j_0$.  Any subfamily $\cF$ of $\cO(D, G)$ is called a 
\emph{normal family} if every sequence
contains a subsequence that converges locally uniformly, or
a subsequence compactly divergent on $D$. Normal families are closely 
related to \emph{tautness} of domains (cf.\ \cite{Wu}). From here on, 
without exception, the notation $\Delta$ represents
the unit open disc in the complex plane $\CC$.


\begin{definition}
A domain $G$ in $\CC^n$ is said to be \emph{taut}, if for any complex 
manifold $M$ the collection $\cO (M, G)$ is a 
normal family.
\end{definition}

Then the following result is a combination of Lemma~1.3 in~\cite{Wu} and 
Theorem~2 in-\cite{Barth}.

\begin{theorem}
\label{Barth}
A domain $G \subset \CC^n$ is taut if and only if $\cO (\Delta, G)$ is a 
normal family, where $\Delta$ denotes the unit disc in $\CC$.
\end{theorem}



We recall the notion of Kobayashi hyperbolicity.

\begin{definition}
A domain $G$ in $\CC^n$ is said to be \emph{Kobayashi hyperbolic} if the Kobayashi pseudo-metric $d_G$ on $G$ (cf. \cite{Koba1970, Koba2006}) is a metric, i.e., $d_G(p,q)>0$ for every $p,q \in G$ with $p \neq q$. $G$ is called \emph{complete} if $d_G$ is complete.
\end{definition}

\begin{remark} All bounded domains in $\CC^n$ clearly are taut. All complete Kobayashi hyperbolic manifolds are taut. And all taut mainfolds are hyperbolic (cf. Theorem (5.1.3)~in \cite{Koba2006}). All hyperbolically embedded domains are taut \cite{Kiernan}. \end{remark}

We need the following generalization of Cartan's uniqueness theorem.

\begin{proposition}
\label{GCUT}
Let $G$ be a Kobayashi hyperbolic domain in $\CC^n$ with $p \in G$.  
If a holomorphic map $f\colon G \to G$ satisfies the following two 
conditions
\begin{enumerate}
\item $f(p)=p$,
\item $df_p$ coincides with the identity map,
\end{enumerate}
then $f$ itself coincides with the identity map.
\end{proposition}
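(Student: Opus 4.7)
My plan is to follow H.~Cartan's classical iteration argument, with the Kobayashi hyperbolicity hypothesis entering only to keep the iterates of $f$ near $p$ in the Euclidean sense. Translate $p$ to the origin and suppose, toward a contradiction, that $f \ne \operatorname{id}$. Since $df_0 = I$, the local power series of $f$ about $0$ takes the form
\[
f(z) = z + \sum_{k \ge m} P_k(z),
\]
where each $P_k$ is a $\CC^n$-valued homogeneous polynomial of degree~$k$ and $m \ge 2$ is the least index with $P_m \not\equiv 0$. A direct induction on $N$, expanding $f \circ f^{\circ (N-1)}$ as a power series and using that the lowest nontrivial homogeneous part of $f - \operatorname{id}$ has degree~$m$, yields
\[
f^{\circ N}(z) = z + N\, P_m(z) + O(|z|^{m+1}) \quad \text{as } z \to 0,
\]
for every $N \ge 1$.

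The key analytic step is to produce a polydisc $V$ with $\overline V \subset G$, centered at~$0$, and an open neighborhood $U$ of~$0$ satisfying $f^{\circ N}(U) \subset V$ for every $N \ge 1$. Here Kobayashi hyperbolicity intervenes through Kobayashi's theorem: on a hyperbolic domain, $k_G$ is a genuine distance inducing the ambient Euclidean topology on~$G$, so the family $\{B_{k_G}(0, \epsilon)\}_{\epsilon > 0}$ is a fundamental system of Euclidean neighborhoods of~$0$. Accordingly, fix $V$ with $\overline V \subset G$, choose $\epsilon > 0$ so small that $B_{k_G}(0, \epsilon) \subset V$, and take $U$ with $\overline U \subset B_{k_G}(0, \epsilon/2)$. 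Since every $f^{\circ N}$ is $k_G$-decreasing and fixes~$0$,
\[
k_G\bigl(f^{\circ N}(z), 0\bigr) = k_G\bigl(f^{\circ N}(z), f^{\circ N}(0)\bigr) \le k_G(z, 0) \le \epsilon/2
\]
for every $z \in \overline U$; hence $f^{\circ N}(U) \subset B_{k_G}(0, \epsilon) \subset V$.

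With uniform boundedness of the iterates on $U$ secured, I would conclude by the Cauchy integral formula on a polydisc $U' \Subset U$ centered at~$0$: each Taylor coefficient of $f^{\circ N}$ at~$0$ is then bounded by a constant depending only on $U'$ and $V$, independent of~$N$. In particular the degree-$m$ coefficient $N\,P_m$ is uniformly bounded in~$N$, which forces $P_m \equiv 0$ on $U'$, hence on $G$ by analytic continuation---the desired contradiction. The main delicacy is the confinement step in the middle paragraph: Kobayashi hyperbolicity supplies the $k_G$-metric bound on the orbit essentially for free, but converting it to a genuine Euclidean confinement rests on Kobayashi's theorem identifying the Kobayashi and Euclidean topologies on a hyperbolic domain. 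Under the stronger assumption that $G$ is taut, one could alternatively invoke Theorem~\ref{Barth} to make $\{f^{\circ N}\}$ a normal family outright and finish by comparing Taylor coefficients of a subsequential limit with the expansion above.
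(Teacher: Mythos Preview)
Your argument is correct and rests on the same geometric input as the paper's: the Kobayashi distance is contracted by $f$, and by Barth's theorem (which you cite as Kobayashi's) the Kobayashi balls form a Euclidean neighborhood basis, so one can trap the dynamics in a Euclidean-bounded region. The only difference is packaging. The paper observes that a single small Kobayashi ball $W$ centered at $p$ is bounded and satisfies $f(W)\subseteq W$, and then invokes the \emph{classical} Cartan uniqueness theorem for bounded domains as a black box on $f|_W$, finishing by the identity theorem. You instead rerun Cartan's iteration-and-Cauchy-estimates argument from scratch on the iterates $f^{\circ N}$. The paper's reduction is shorter and makes the logical structure transparent (hyperbolicity is used only to manufacture a bounded $f$-invariant subdomain); your version is more self-contained. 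Either way the substance is the same.
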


\begin{proof}
The proof here is extracted from the most general 
version in~\cite{LeeKH2005}. Take a bounded open neighborhood $U$ 
of $p$ in $G$. Recall that the taut domains are Kobayashi hyperbolic 
(cf.~\cite{Koba1970}), and that the standard topology of~$G$ is 
equivalent to the metric topology of the Kobayashi distance~\cite{Barth1972}.  
Consequently, $U$ is the union of Kobayashi distance open balls contained 
in $U$. In particular, there is $r>0$ such that the open Kobayashi ball, say
$W$, of radius $r$ centered at $p$ is contained in $U$.  Then the 
distance-decreasing property of $f$ yields that $f(W) \subseteq W$.  
Since $W$ is a bounded open region in $\CC^n$, the classical 
Cartan uniqueness theorem implies that $f$ coincides with the 
identity map on $W$.  Then it follows from the identity theorem for 
holomorphic functions that $f$ is the identity map on the whole of $G$, 
which yields the proof.
\end{proof}
\medskip

Now, we present the following high-dimensional analog of the original 
Carath\'eodory kernel theorem  (cf.~\cite{Cara1912}).  
To avoid excessive notation, we denote by 
$f\colon (D, p) \to (G, q)$ the map $f\colon D \to G$
satisfying $f(p)=q$.

\begin{theorem} \label{GCTh} 
Let $\mathcal{D}=\{(D_j, p_j)\}_{j\geq1}$ and 
$\mathcal{G}=\{(G_j, q_j)\}_{j\geq1}$ be sequences of pointed 
domains in $\CC^n$ tamed at $\hat p$ and $\hat q$, respectively, admitting 
taut domains $\widetilde D$ and 
$\widetilde G$ such that $D_j \subset \widetilde D$ and 
$G_j \subset \widetilde G$ for every $j\geq 1$. 
If $\{f_j \colon (D_j, p_j) \to (G_j, q_j)\}_{j\geq1}$ is a sequence of 
biholomorphic maps, then the following hold:
\begin{enumerate}
\item There is a subsequence of 
$\{f_j \colon (D_j, p_j) \to (G_j, q_j)\}_{j\geq1}$ for which the 
corresponding subsequences of $\mathcal{D}$ and $\mathcal{G}$,
respectively, converge to their own kernels $(\hat D, \hat p)$ and
$(\hat G, \hat q)$, respectively.
\item The resulting subsequence of (1) satisfies that every 
subsequential limit (with respect to the compact-open topology) is a 
biholomorphism from $(\hat D, \hat p)$ onto $(\hat G, \hat q)$.
\end{enumerate}
\end{theorem}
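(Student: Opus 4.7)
The plan is to prove part~(1) by two successive applications of Theorem~\ref{selection_thm}, and to derive part~(2) from a combination of normal-family extraction (via tautness), a Newton-type inverse-function argument, and the generalized Cartan uniqueness of Proposition~\ref{GCUT}. For part~(1), I first apply Theorem~\ref{selection_thm} to $\mathcal D$ to extract a subsequence $\sigma_1$ along which $\mathcal D$ converges to its kernel $(\hat D,\hat p)$; restricting to $\sigma_1$, I apply the same theorem to $\mathcal G$ to pass to a further subsequence $\sigma$ along which $\mathcal G$ converges to its kernel $(\hat G,\hat q)$. Convergence to the kernel is preserved under further subsequencing (every subsequence of a kernel-convergent sequence has the same kernel), so $\mathcal D$ still converges to $(\hat D,\hat p)$ along $\sigma$.

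For part~(2), let $\hat f$ be the compact-open limit of any convergent sub-subsequence of $\{f_j\}_{j\in\sigma}$. Such limits exist: each compact connected $K\subset\hat D$ containing $\hat p$ is eventually contained in $D_j$, so $\{f_j|_K\}$ is a normal family of maps into the taut domain $\widetilde G$ by Theorem~\ref{Barth}, with compact divergence excluded because $f_j(p_j)=q_j\to\hat q\in\widetilde G$; a diagonal exhaustion of $\hat D$ yields $\hat f\colon\hat D\to\widetilde G$. Passing to a further sub-subsequence, I extract $\hat g\colon\hat G\to\widetilde D$ from $\{f_j^{-1}\}$ analogously. Continuity gives $\hat f(\hat p)=\hat q$ and $\hat g(\hat q)=\hat p$, and taking $j\to\infty$ in $d(f_j^{-1})|_{q_j}\circ df_j|_{p_j}=I$ shows that $d\hat f|_{\hat p}$ is invertible with inverse $d\hat g|_{\hat q}$. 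The classical fact that a locally uniform limit of injective holomorphic maps in $\CC^n$ whose Jacobian determinant does not vanish identically is itself injective then makes $\hat f\colon\hat D\to\hat f(\hat D)$ a biholomorphism onto an open subset of $\widetilde G$, and likewise for $\hat g$.

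The main obstacle is to show $\hat f(\hat D)\subset\hat G$ (and symmetrically $\hat g(\hat G)\subset\hat D$). Given a compact connected $L\subset\hat f(\hat D)$ with $\hat q\in L$, set $K:=\hat f^{-1}(L)$ and choose an open connected $U$ with $K\Subset U\Subset\hat D$; by the kernel convergence of $\mathcal D$, $\overline U\subset D_j$ for $j$ large. Then $\phi_j:=f_j\circ(\hat f|_U)^{-1}\colon\hat f(U)\to\widetilde G$ is a biholomorphism onto $f_j(U)\subset G_j$ converging locally uniformly, and by Cauchy estimates also in $C^1$ on compact subsets, to the identity on $\hat f(U)$. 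A quantitative inverse-function (Newton-iteration) argument then produces an open connected neighborhood $V$ of $L$, independent of $j$, with $\hat q\in V\Subset\hat f(U)$ and $V\subset f_j(U)\subset G_j$ for all sufficiently large $j$. The kernel definition now places $L\subset V\subset\hat G$; as such $L$ exhaust $\hat f(\hat D)$, this yields $\hat f(\hat D)\subset\hat G$.

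Since $\hat D$ is a subdomain of the taut, hence Kobayashi hyperbolic, domain $\widetilde D$, it is itself Kobayashi hyperbolic. The composition $\hat g\circ\hat f\colon\hat D\to\hat D$ is therefore a holomorphic self-map fixing $\hat p$ with identity differential there, so Proposition~\ref{GCUT} forces $\hat g\circ\hat f=\mathrm{id}_{\hat D}$; symmetrically $\hat f\circ\hat g=\mathrm{id}_{\hat G}$. Hence $\hat f\colon\hat D\to\hat G$ is a biholomorphism with inverse $\hat g$, and since this argument applies to the limit of an arbitrary convergent sub-subsequence of $\{f_j\}_{j\in\sigma}$, the conclusion of~(2) follows.
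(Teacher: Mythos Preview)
Your proof is correct and follows the same overall architecture as the paper: two successive applications of Theorem~\ref{selection_thm} for part~(1), then normal-family extraction via tautness together with Proposition~\ref{GCUT} applied to $\hat g\circ\hat f$ and $\hat f\circ\hat g$ for part~(2).

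The one substantive difference is that the paper simply \emph{asserts} that the extracted limits satisfy $\hat f(\hat D)\subset\hat G$ and $\hat g(\hat G)\subset\hat D$, whereas you supply an argument for this (Hurwitz for nonvanishing Jacobian and injectivity, then a quantitative inverse-function/Newton step to show that $f_j(U)$ eventually contains a fixed neighborhood $V$ of $L$, placing $L$ in the kernel $\hat G$). This is a genuine point the paper leaves implicit, and your treatment of it is sound. Note that the separate injectivity of $\hat f$ via Hurwitz, while convenient for defining $\phi_j=f_j\circ(\hat f|_U)^{-1}$, is not logically required for the final conclusion: once $\hat f(\hat D)\subset\hat G$ and $\hat g(\hat G)\subset\hat D$ are established, the paper's direct application of Proposition~\ref{GCUT} already forces $\hat g\circ\hat f=\mathrm{id}_{\hat D}$ and $\hat f\circ\hat g=\mathrm{id}_{\hat G}$, which yields bijectivity. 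So your route is slightly longer but makes explicit a step the paper elides.
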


Notice that, by monotonicity of the kernels, $\hat f$ is at least defined 
on the kernel of the initial sequence of domains 
$\mathcal{D}=\{(D_j, p_j)\}_{j\geq1}$.

\begin{proof}
Throughout this proof, we are going to take the 
subsequences of $\{f_j\}_{j\geq1}$ successively, as many times 
as necessary. While doing so,  we shall continue using the same 
notation $\{f_j\}_{j\geq1}$ for these subsequences.
\smallskip

By the selection theorem (Theorem \ref{selection_thm}), we take a 
subsequence of $\{f_j\}_{j\geq1}$ so that a subsequence of $\mathcal{D}$ 
converges to its own kernel~$(\hat D, \hat p)$. 
Then we extract a subsequence again so that a subsequence of 
$\mathcal{G}$ also converges to its own kernel~$(\hat G, \hat q)$.
\smallskip

The tautness of $\widetilde D$ and $\widetilde G$ implies that
a subsequence can be extracted for the third time, so that 
$\{f_j\}_{j\geq1}$ converges uniformly on compact subsets to 
a holomorphic map $\hat f$ from $(\hat D, \hat p)$ into
$(\hat G, \hat q)$. Notice that the possibility of 
compactly divergent subsequence is immaterial, since our sequences of
pointed domains under consideration are \textit{tamed}.
Then a subsequence can be extracted for the fourth 
time so that $\{f_j^{-1}\}_{j\geq1}$ converges locally uniformly to a 
holomorphic map $\hat g$ from $(\hat G, \hat q)$ into 
$(\hat D, \hat p)$.  
\smallskip

Let $F := \hat g \circ \hat f$.  Then $F(\hat p)=\hat p$, and 
$dF|_{\hat p} = \lim_{j \to \infty} (df_j|_{\hat p})^{-1} \circ df_j|_{\hat p}$ 
equals the identity map.  Since the domains $\hat G$ and $\hat D$ are 
contained in the taut domains $\widetilde G$ and $\widetilde D$, 
respectively, they are Kobayashi hyperbolic.  
By Proposition~\ref{GCUT}, the map
$F$ coincides with the identity map on~$\hat D$.  Since the same argument
works for $G:= \hat f \circ \hat g$ on~$\hat G$, it follows that 
$\hat f:(\hat D,\hat p) \to (\hat G,\hat q)$ and 
$\hat g:(\hat G,\hat q) \to (\hat D,\hat p)$ are 
biholomorphisms with $\hat{f}^{-1}=\hat g$.  
This completes the proof.
\end{proof}
\medskip

Related to the extension of the limit maps in the previous theorem, we 
present the following

\begin{proposition}\label{multimap_ext}
Let $\{f_j \colon (D_j, p_j) \to (G_j, q_j)\}_{j\geq1}$ be the sequence 
biholomorphic maps and let 
$\hat f \colon (\hat D, \hat p) \to (\hat G, \hat q)$ be the subsequential 
limit given in Theorem~\ref{GCTh}.
Then $\hat f$ extends holomorphically to the union of the maximal
kernels as a multimap.
\end{proposition}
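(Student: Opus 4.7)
The plan is to realize $\hat f$ as a single branch of a multimap whose branches are indexed by the maximal elements of the poset $\cK_\mathcal{D}$ from the proof of Theorem~\ref{selection_thm}, obtained by re-running Theorem~\ref{GCTh} along each such maximal element. The key observation underlying this is that a subsequence $\tau$ of $\mathcal{D}$ converges to its own kernel if and only if $\Ker_{\hat p}(\tau)$ is a maximal element of $\cK_\mathcal{D}$; this is the role played by the maximality of $\cC_\sigma$ at the very end of the proof of Theorem~\ref{selection_thm}. Denote by $\mathcal{M}_\mathcal{D}$ the collection of such maximal kernels; the statement then asserts that $\hat f$ extends as a multimap to the open set $\bigcup_{(\hat D_K,\hat p)\in \mathcal{M}_\mathcal{D}} \hat D_K$.

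First, for each maximal kernel $(\hat D_K, \hat p) \in \mathcal{M}_\mathcal{D}$, I would fix a subsequence $\tau_K$ of indices so that the corresponding subsequence of $\mathcal{D}$ has kernel $\hat D_K$ and already converges to it. Applying Theorem~\ref{GCTh} to the restricted sequence of biholomorphisms $\{f_j\}_{j \in \tau_K}$ and using the tautness of $\widetilde D$ and $\widetilde G$, a successive extraction yields a sub-subsequence along which the range sequence $\{(G_j, q_j)\}_{j \in \tau_K}$ converges to some kernel $L_K$ (maximal in $\cK_\mathcal{G}$) and along which $\{f_j\}$ and $\{f_j^{-1}\}$ converge locally uniformly to mutually inverse biholomorphisms $\hat f_K : (\hat D_K, \hat p) \to (L_K, \hat q)$ and $\hat g_K = \hat f_K^{-1}$. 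In particular, each $\hat f_K$ is holomorphic on $\hat D_K$.

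Next, I would define the multimap $\widehat F$ on $\bigcup_{K} \hat D_K$ by
\[
\widehat F(z) := \bigl\{\, \hat f_K(z) : (\hat D_K,\hat p) \in \mathcal{M}_\mathcal{D},\ z \in \hat D_K \,\bigr\}.
\]
Every branch $\hat f_K$ is holomorphic by the previous step, and the original map $\hat f$ appears as one branch, namely $\hat f_{\hat D}$ corresponding to the maximal kernel $(\hat D, \hat p)$ produced in Theorem~\ref{GCTh}. Therefore $\widehat F$ extends $\hat f$ holomorphically as a multimap over the union of all maximal kernels, as claimed.

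The main obstacle is conceptual rather than technical: two distinct maximal kernels $K, K' \in \mathcal{M}_\mathcal{D}$ may overlap in a large open set, and on the overlap the branches $\hat f_K$ and $\hat f_{K'}$ can genuinely differ. Example~\ref{drunken-ellipsi} already exhibits two distinct maximal kernels of a single tamed sequence, and equipping such a sequence with biholomorphisms shows the corresponding limit maps need not agree on common points even though they share the value $\hat q$ at $\hat p$. This is precisely why the extension is only a multimap and not a single-valued holomorphic function; ruling out such branching would require an extra hypothesis (for instance, that the full sequence $\mathcal{D}$ converges to its kernel). The rest of the argument is a routine bookkeeping exercise of repeated applications of Theorem~\ref{GCTh}.
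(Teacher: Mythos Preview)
Your construction is correct and is in fact more explicit than the paper's own argument: you build one holomorphic branch $\hat f_K$ per maximal kernel by re-running Theorem~\ref{GCTh} along a subsequence realizing that kernel, and then package the branches into a set-valued map.  The paper's proof takes this construction essentially for granted and instead records an additional structural fact you do not mention: any two subsequential limits $\hat h,\hat g$ arising from Theorem~\ref{GCTh} are defined on domains that both contain the \emph{minimal} kernel $\Ker_{\hat p}\{(D_j,p_j)\}_{j\ge1}$, and on that common core they differ by a complex \emph{affine} biholomorphism $A$ of $\CC^n$, i.e.\ $\hat h = A\circ \hat g$ there.  So the paper's emphasis is on the coherence of the branches over the minimal kernel, whereas yours is on the existence of the branches over each maximal kernel; both viewpoints together describe the multimap.

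Two small inaccuracies worth fixing.  First, your ``if and only if'' is false in one direction: a subsequence can converge to its kernel without that kernel being maximal in $\cK_{\mathcal D}$ (take $D_j$ to be the unit disc for odd $j$ and the disc of radius~$2$ for even $j$; the odd subsequence converges to the unit disc, which is not maximal).  Only the implication ``maximal $\Rightarrow$ convergent'' is established at the end of the proof of Theorem~\ref{selection_thm}, and that is the only direction your argument actually uses.  Second, to guarantee that the original $\hat f$ literally appears as one of your branches you must take $\tau_{\hat D}$ to be the \emph{specific} subsequence produced in Theorem~\ref{GCTh}, not an arbitrary subsequence with the same domain kernel; a different subsequence with the same kernel $\hat D$ could yield a different limit map.  (Your parenthetical that $L_K$ is maximal in $\cK_{\mathcal G}$ is also not quite right---it is maximal only among kernels of subsequences of $\tau_K$---but you never use that claim.)
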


\begin{proof}
Notice that $\Ker_{\hat p} \{(D_j, p_j)\}_{j\geq1}$ is the minimal element 
in the set of the kernels of all subsequences of~$\{(D_j, p_j)\}_{j\geq1}$. 
Any two subsequential limits $\widehat{h}$ and $\widehat{g}$ resulting from 
Theorem~\ref{GCTh} with maximal kernels $(\widehat{D^h}, \hat p)$ and 
$(\widehat{D^g}, \hat p)$, respectively, have the following properties. Firstly, both maximal kernels contain $\Ker_{\hat p} \{(D_j, p_j)\}_{j\geq1}$ 
as a subset. Secondly, they admit a complex affine biholomorphism $A$ of $\CC^n$ 
such that $\hat h = A \circ \hat g$ at every point of the kernel
$\Ker_{\hat p} \{(D_j, p_j)\}_{j\geq1}$. This proves the assertion.
\end{proof}
\medskip

\begin{remark} \label{ex-flying-saucer}
There have been some suggestions that it might suffice to assume
that the kernels of the pointed domains are taut, or even 
complete Kobayashi hyperbolic.  Consider the sequence of the following 
domains pointed at the origin and stretching to infinity along the 
$z$-axis, defined by 
\[
S_j = \{(z,w) \in \CC^2:|z|^2 + |w|^2 + \frac1j \log |w|^2 < 1\}
\]
for $j\geq1$. This sequence converges to its kernel,
the open unit ball in $\CC^2$, which is completely hyperbolic.  
On the other hand, the normal family arguments fail for the maps 
from $S_j$ to itself.
\end{remark}

\section{On computations of kernels}
\label{section-kernel-computations}

We present some methods of computing the 
Carath\'eodory kernel for tamed sequences of pointed domains. 

Example~\ref{Pac-man_example} and Remark~\ref{ex-flying-saucer} can be viewed as direct applications of the next

\begin{proposition} 
\label{SomeKernels}
Let $\{(G_j,p_j)\}_{j \geq 1}$ be a sequence of domains tamed at $\hat p$. 
Then the following hold:
\begin{enumerate}
\item If $\{G_j\}_{j \geq 1}$ is increasing, i.e., $G_j \subseteq G_{j+1}$ for
every $j\geq1$, then
\[
\Ker_{\hat p} \{(G_j, p_j)\}_{j\geq1}  
= \Big(\bigcup_{j \geq 1} G_j,\hat{p} \Big).
\]
\item If $\{G_j\}_{j \geq 1}$ is decreasing, i.e., $G_{j+1} \subseteq G_{j}$ 
for every $j\geq1$, then
\[
\Ker_{\hat p} \{(G_j, p_j)\}_{j\geq1}  = \Big(\operatorname{Conn}_{\hat{p}}
\big(\bigcap_{j \geq 1} G_j\big)^\circ,\hat{p} \Big).
\]
\end{enumerate}
\end{proposition}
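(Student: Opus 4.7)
The plan is to apply the definition of the Carath\'eodory kernel directly and observe that, in each case, monotonicity causes the intersection $\bigcap_{j \geq k} G_j$ to simplify so much that the outer union $\bigcup_{k\geq1} \operatorname{Conn}_{\hat p}(\cdots)$ collapses to a single familiar set.

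For part~(1), the hypothesis $G_j \subseteq G_{j+1}$ gives $\bigcap_{j \geq k} G_j = G_k$, the smallest element of the family, which is already open and connected. By tameness there exists $k_0 \geq 1$ with $\hat p \in G_k$ for all $k \geq k_0$; for such $k$ the component $\operatorname{Conn}_{\hat p}(G_k)$ simply equals $G_k$. Thus the kernel reduces to $\bigcup_{k \geq k_0} G_k$, and since $G_k \subseteq G_{k_0}$ for $k < k_0$, the union can be harmlessly extended back to all $k \geq 1$, yielding $\bigcup_{j \geq 1} G_j$ as claimed.

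For part~(2), the dual observation is that $\bigcap_{j \geq k} G_j = \bigcap_{j \geq 1} G_j$ for every $k \geq 1$: in a decreasing family, dropping a term $G_j$ with $j < k$ does not change the intersection since $G_j \supseteq G_k$. Hence every summand of the outer union is the same single set $\operatorname{Conn}_{\hat p}\bigl(\bigl(\bigcap_{j \geq 1} G_j\bigr)^\circ\bigr)$, matching the claimed formula. Tameness, applied to the decreasing family, places an open neighborhood of $\hat p$ inside $\bigcap_{j \geq 1} G_j$, so this connected component is well-defined and nonempty.

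No substantial obstacle is expected; the whole argument is a short set-theoretic manipulation once one uses that in a decreasing sequence the tail intersection is constant in $k$, and in an increasing sequence it equals the first term. The only bookkeeping point is the treatment of small indices $k$ where $\hat p$ may fail to lie in $G_k$ (in part~(1)): those terms contribute the empty set to the outer union and are absorbed by later, larger terms.
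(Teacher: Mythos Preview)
Your proposal is correct and matches the paper's approach: the paper simply states that the proof follows directly from the definition, and your argument does exactly that, spelling out how monotonicity collapses the tail intersections to $G_k$ (increasing case) or to the full intersection $\bigcap_{j\ge 1}G_j$ (decreasing case). Your handling of the small indices where $\hat p$ may not yet lie in $G_k$ is the only bookkeeping detail beyond what the paper writes, and it is fine.
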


\begin{proof}
The proof is straight-forward and follows from the definition directly.
\end{proof}

\begin{remark} 
If the sets $G_j$ are domains of holomorphy (= pseudoconvex domains), 
then it is well known that the kernels in Proposition~\ref{SomeKernels} 
are pseudoconvex, by the Behnke-Stein theorem and the 
Cartan-Thullen theorem. In general, a tamed sequence of pointed 
domains may not be monotone. 
Nevertheless, if the members of the sequence are pseudoconvex,
$V_k:=\operatorname{Conn}_{\hat{p}}\big(\bigcap_{j \geq k} G_j\big)^\circ$ 
is pseudoconvex for every $k\geq1$. Since the sequence $\{V_k\}_{k\geq1}$ 
is increasing, the kernel is pseudoconvex, as well. 
\end{remark}

In the graph case, we have the following

\begin{theorem} \label{graph-case}
Let the sequence of domains $\{G_j\}_{j\geq1}$ be given by
\[
G_j := \{(z_1, \ldots, z_n) \in \CC^n \colon 
\re(z_1) > \varphi_j (\im (z_1), z_2, \ldots z_n)\},
\]
where $\varphi_j \colon \Pi \to \RR$ is a $\cC^1$-smooth
function with $\varphi_j (0, \ldots, 0) = 0$ defined over the hyperplane 
$\Pi = \{(z_1, \ldots, z_n) \in \CC^n \colon \re (z_1 )=0\}$. Denote by 
$\mathbf{1} := (1,0,\ldots,0)$.  Assume that the sequence 
$\{\varphi_j\}_{j\geq1}$ converges  uniformly on compact subsets of 
$\Pi$ to $\widehat{\varphi} \colon \Pi \to \RR$.
Then the sequence $\{(G_j, \mathbf{1})\}_{j\geq1}$ of pointed domains
is tamed at $\mathbf{1}$ and converges in the sense of Carath\'eodory 
to the pointed domain
$(\widehat G, \textbf{1})$ given by $\big(\{\re (z_1) > \widehat{\varphi}\},\mathbf{1}\big)$.
\end{theorem}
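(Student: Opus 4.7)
My plan is to identify $(\widehat G, \mathbf{1})$ with the normal limit of $\{(G_j, \mathbf{1})\}_{j \ge 1}$ in the sense of Definition~\ref{NormalConv} and then invoke the proposition equating normal limits with Carath\'eodory kernels. The crucial observation is that the hypotheses pass to every subsequence: if $\varphi_j \to \widehat\varphi$ uniformly on compact subsets of $\Pi$, then so does $\{\varphi_{j_k}\}$. Thus, once I show the full sequence has kernel $\widehat G$, the same argument applied to any subsequence yields the same kernel, which is exactly Carath\'eodory convergence.

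First, I verify tameness and that $\widehat G$ is a domain containing $\mathbf{1}$. The limit $\widehat\varphi$ is continuous as a uniform-on-compacta limit of continuous functions, with $\widehat\varphi(0)=0$. For sufficiently small $r>0$, the closed ball $\bar B_r(\mathbf{1})$ satisfies $\re(z_1)>1/2$ throughout, while its projection $K_r$ to $\Pi$ has $\widehat\varphi<1/4$ on $K_r$. Uniform convergence on $K_r$ then forces $\varphi_j<1/2$ there for all large $j$, so $B_r(\mathbf{1})\subset G_j$ eventually. Openness of $\widehat G$ is immediate from continuity of $\widehat\varphi$; connectedness follows because any two points of $\widehat G$ can be joined through the ``far-right'' region where $\re(z_1)$ exceeds the supremum of $\widehat\varphi$ over a compact piece of $\Pi$ containing their projections.

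Next, I check the two conditions of Definition~\ref{NormalConv}. For condition~(1), given a compact connected $K\subset\widehat G$ with $\mathbf{1}\in K$, the continuous function $h(z):=\re(z_1)-\widehat\varphi(\im(z_1),z_2,\ldots,z_n)$ attains a positive minimum $2\epsilon$ on $K$. A small open Euclidean neighborhood $U\supset K$ still has $\bar U\subset\widehat G$ with $h\ge\epsilon$ on $\bar U$. Uniform convergence of $\varphi_j$ on the compact projection of $\bar U$ to $\Pi$ then yields $\bar U\subset G_j$ for every $j\ge k_0$, hence $K\subset U\subset(\bigcap_{j\ge k_0}G_j)^\circ$. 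For condition~(2), suppose $L\ni\mathbf{1}$ is compact connected and contained in $V:=(\bigcap_{j\ge k_1}G_j)^\circ$. For each $(z_1,z')\in L$, openness of $V$ supplies $\delta>0$ with $(z_1-\delta,z')\in V\subset G_j$ for every $j\ge k_1$, whence $\re(z_1)-\delta\ge\varphi_j(\im(z_1),z')$. Passing $j\to\infty$ yields $\re(z_1)\ge\widehat\varphi(\im(z_1),z')+\delta>\widehat\varphi(\im(z_1),z')$, so $(z_1,z')\in\widehat G$.

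I expect condition~(2) to be the main obstacle. From $L\subset G_j$ for large $j$ alone one only deduces $\re(z_1)\ge\widehat\varphi$ in the limit, which places $L$ in $\overline{\widehat G}$ but not necessarily in $\widehat G$ itself. The strict inequality demands the wiggle room furnished by $L$ lying in the \emph{interior} of the intersection---precisely the feature that motivates Definition~\ref{NormalConv}'s use of interiors. Everything else reduces to routine uniform-convergence arguments on compact subsets of $\Pi$.
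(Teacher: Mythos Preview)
Your proposal is correct and follows exactly the approach the paper sketches: identify $(\widehat G,\mathbf{1})$ as the normal limit via Definition~\ref{NormalConv}, then observe that the hypotheses are inherited by every subsequence so that all subsequential kernels coincide. The paper leaves the verification of conditions~(1) and~(2) implicit (calling the proof ``straightforward'' and also pointing to Theorem~\ref{prop-kernel-def-fct} as an alternative), whereas you spell out these details---including the key use of the interior in condition~(2) to recover strict inequality---so your write-up is a faithful fleshing-out of the paper's argument.
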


The proof is straightforward, since the normal limit of the sequence 
$\{(G_j, \mathbf{1})\}_{j\geq1}$ turns out to be $(\widehat G, \textbf{1})$ 
and, moreover, the normal limits stay the same for the subsequences of 
$\{(G_j, \mathbf{1})\}_{j\geq1}$.  The statement follows also from the more
general result Theorem~\ref{prop-kernel-def-fct}.

The kernel is, in a wider sense, related to the limit infimum of sets.

\begin{definition}
Let $\{G_j\}_{j\geq1}$ be a sequence of domains in $\CC^n$.  Then define by
\[
\liminf_{j\to\infty} G_j := \bigcup_{k\ge 1} \bigcap_{j \ge k} G_j
\]
the \emph{limit infimum} of $\{G_j\}_{j\geq1}$.  Notice that, in general, 
it might not be an open set.  Define also by
\[
\preker \{G_j\}_{j\geq1} 
:= \bigcup_{k\ge 1} \Big(\bigcap_{j \ge k} G_j\Big)^\circ
\]
the \emph{pre-kernel} of the sequence $\{G_j\}_{j\geq1}$.  
The pre-kernel need not be connected in general, but it is always open.
\end{definition}

Then we present the relation between the pre-kernel and the kernel 
in the sense of Carath\'eodory.

\begin{lemma} \label{ker_vs_preker}
Let $\{(G_j, p_j)\}_{j\geq1}$ be a pointed sequence of domains tamed at~$\hat p$, and let
$\widehat G = \preker\{G_j\}_{j\geq1}$ be 
its pre-kernel.  If $\hat p \in \widehat G$, then
\[
\Ker_{\hat p} \{(G_j,p_j)\}_{j\geq1}
= \big(\operatorname{Conn}_{\hat p} (\widehat G), \hat p\big).
\]
\end{lemma}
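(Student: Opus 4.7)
The plan is to unfold both sides and verify the set-theoretic equality
\[
A := \bigcup_{k\ge 1} \operatorname{Conn}_{\hat p}\Big(\big(\bigcap_{j\ge k} G_j\big)^\circ\Big) = \operatorname{Conn}_{\hat p}(\widehat G) =: B,
\]
since the left-hand side is the first coordinate of the Carath\'eodory kernel by definition. I abbreviate $U_k := \big(\bigcap_{j\ge k} G_j\big)^\circ$, so that $\widehat G = \bigcup_{k\ge 1} U_k$. Tameness at $\hat p$ supplies a $k_0$ with $\hat p \in U_{k_0}$, and the monotonicity $\bigcap_{j\ge k} G_j \subseteq \bigcap_{j\ge k+1} G_j$ yields that $\{U_k\}_{k \geq 1}$ is an increasing family of open sets; in particular $\hat p \in U_k$ for all $k \ge k_0$, so both $A$ and $B$ are nonempty open sets containing $\hat p$.

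For $A \subseteq B$, I would note that for each $k \ge k_0$ the set $\operatorname{Conn}_{\hat p}(U_k)$ is a connected subset of $\widehat G$ containing $\hat p$, hence it lies in the connected component of $\widehat G$ through $\hat p$, namely $B$. Taking the union over $k$ gives $A \subseteq B$. For $k < k_0$, the term is vacuous (or already $\hat p$), so it contributes nothing new.

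The reverse inclusion $B \subseteq A$ is the main step. Because $\widehat G \subseteq \CC^n$ is open, its connected component $B$ through $\hat p$ is open and path-connected. Given any $z \in B$, choose a continuous path $\gamma \colon [0,1] \to B$ with $\gamma(0)=\hat p$ and $\gamma(1)=z$. The image $\gamma([0,1])$ is a compact connected subset of $\widehat G = \bigcup_{k\ge 1} U_k$. Because the $U_k$ are increasing and open, compactness yields a single index $k^* \ge k_0$ with $\gamma([0,1]) \subseteq U_{k^*}$. Since $\gamma([0,1])$ is connected and contains $\hat p$, it lies inside $\operatorname{Conn}_{\hat p}(U_{k^*}) \subseteq A$; in particular $z \in A$.

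The conceptual subtlety, and the only place where some care is needed, is that $\operatorname{Conn}_{\hat p}$ does not distribute over increasing unions in full generality for abstract connected sets; the proof relies precisely on the fact that $\widehat G$ is an \emph{open} subset of $\CC^n$, which allows us to trade connectedness for path-connectedness and then apply compactness of the path's image. Once this observation is in place, the argument is essentially mechanical and both inclusions combine to give the claimed identification of the kernel with $(\operatorname{Conn}_{\hat p}(\widehat G), \hat p)$.
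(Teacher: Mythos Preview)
Your proof is correct and follows essentially the same approach as the paper's: the inclusion $A \subseteq B$ is the easy direction (each $\operatorname{Conn}_{\hat p}(U_k)$ is a connected subset of $\widehat G$ through $\hat p$), while the inclusion $B \subseteq A$ uses path-connectedness of the open set $B \subseteq \CC^n$ together with compactness of the path image inside the increasing open cover $\{U_k\}$ to land in a single $U_{k^*}$. Your write-up is in fact slightly more explicit than the paper's about why compactness yields a single index and about the vacuous terms for $k < k_0$.
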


\begin{proof} 
Let $z \in \operatorname{Conn}_{\hat p} (\widehat G)$ and recall that a 
connected open set in $\CC^n$ is always path-connected.  Then there is 
a path $\gamma$ in $\widehat G$ connecting $z$ and $\hat p$. By the 
definition of the pre-kernel, there is an index $k_0\geq1$ such that
 $\gamma$ is contained in the interior of $\bigcap_{j\ge k_0} G_j$.  
Thus,
the image of $\gamma$ lies inside the connected component of 
$(\bigcap_{j \geq k_0} G_j)^\circ$ containing $\hat p$.  
Therefore, we obtain
\[
\big(\operatorname{Conn}_{\hat p} (\widehat G), \hat p\big) 
\subseteq \Ker_{\hat p} \{(G_j,p_j)\}_{j\geq1}.
\]
For the reverse inclusion notice that
\[
\hat p \in \operatorname{Conn}_{\hat p} 
\Big( \bigcap_{j \ge k_0} G_j \Big)^\circ \subseteq 
\Big( \bigcap_{j \ge k_0} G_j \Big)^\circ, 
\]
implies
\[
\hat p \in \bigcup_{k_0 \ge 1} \operatorname{Conn}_{\hat p} 
\Big( \bigcap_{j \ge k_0} G_j \Big)^\circ
\subseteq 
\bigcup_{k_0 \ge 1} \Big( \bigcap_{j \ge k_0} G_j \Big)^\circ,
\]
which in turn leads to 
\begin{align*}
\Ker_{\hat p} \{(G_j,p_j)\}_{j\geq1} 
&\subseteq \Big(\operatorname{Conn}_{\hat p} 
\bigcup_{k_0 \ge 1} \Big( \bigcap_{j \ge k_0} G_j \Big)^\circ, \hat p\Big) \\
&= \big(\operatorname{Conn}_{\hat p} (\widehat G), \hat p\big).
\end{align*}
This yields the desired conclusion. 
\end{proof}
\bigskip

\begin{theorem}\label{prop-kernel-def-fct} 
Let $\{\psi_j\}_{j\geq1}$ be a sequence of upper semi-continuous 
functions on some domain $D$ in $\CC^n$, and let $\{p_j\}_{j\geq1}$ be
a sequence of points in $D$ converging to $\hat p$. 
Assume that $\big\{(\{\psi_j<0\}, p_j)\big\}_{j\geq1}$ is 
a sequence of pointed domains tamed at $\hat p$.
Define by $\Psi$ the function
\[
\Psi(z) :=\inf_{k\geq1} \big(\sup_{j \geq k} \psi_j\big)^* (z), \ z \in D,
\]
where $f^*$ denotes the 
upper semi-continuous regularization of~$f$ defined by 
$f^*(w):=\limsup_{\zeta \to w} f(\zeta)$.  If $\{ \Psi < 0\}$ is
connected and contains $\hat p$, and if additionally $\{ \Psi \leq 0\}^\circ = \{ \Psi < 0\}$, then 
\[
\Ker_{\hat p} \big\{(\{\psi_j<0\}, p_j)\big\}_{j\geq1} =\big(\{ \Psi < 0\}, \hat{p}\big).
\]
\end{theorem}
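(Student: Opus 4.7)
My plan is to derive the theorem from Lemma~\ref{ker_vs_preker} by showing that the pre-kernel
\[
\widehat G \;=\; \bigcup_{k \geq 1}\Big(\bigcap_{j \geq k}\{\psi_j < 0\}\Big)^\circ
\]
coincides with the open set $\{\Psi < 0\}$. Because $\{\Psi < 0\}$ is assumed to be connected and to contain $\hat p$, this identification forces $\operatorname{Conn}_{\hat p}(\widehat G) = \{\Psi < 0\}$, and Lemma~\ref{ker_vs_preker} then delivers the claim.

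As a preparatory step I would introduce the auxiliary function $\Phi_k := \sup_{j \geq k} \psi_j$, so that $\bigcap_{j \geq k}\{\psi_j < 0\} = \{\Phi_k < 0\}$. Since $\{\Phi_k\}_{k \geq 1}$ is pointwise decreasing in $k$, so is $\{\Phi_k^*\}_{k \geq 1}$, which gives $\Psi = \inf_{k} \Phi_k^*$ and, by monotonicity, $\{\Psi < 0\} = \bigcup_{k \geq 1}\{\Phi_k^* < 0\}$. The only tool I will need throughout is the defining inequality $\Phi_k^*(w) \leq \sup_{V} \Phi_k$ valid for every open neighborhood $V$ of $w$.

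For the inclusion $\{\Psi < 0\} \subseteq \widehat G$ the argument should be short: if $\Phi_k^*(z) < 0$ for some $k$, then by the definition of the usc regularization there is an open neighborhood $V$ of $z$ on which $\Phi_k < 0$, hence $V \subseteq \bigcap_{j \geq k}\{\psi_j < 0\}$ and $z \in \widehat G$.

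The reverse inclusion is the main obstacle, and it is precisely where the hypothesis $\{\Psi \leq 0\}^\circ = \{\Psi < 0\}$ is needed. Given $z \in \widehat G$, I would fix $k_0$ and an open neighborhood $U$ of $z$ with $U \subseteq \bigcap_{j \geq k_0}\{\psi_j < 0\}$. For each $w \in U$ and each $k \geq k_0$, using $U$ itself in the defining formula for $\Phi_k^*(w)$ yields only the non-strict bound $\Phi_k^*(w) \leq \sup_{U} \Phi_k \leq 0$, hence $\Psi \leq 0$ on $U$. The strict inequality cannot be extracted from this computation alone, and this is exactly the gap the hypothesis bridges: $U$ is open and contained in $\{\Psi \leq 0\}$, so $U \subseteq \{\Psi \leq 0\}^\circ = \{\Psi < 0\}$, and in particular $z \in \{\Psi < 0\}$. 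This establishes $\widehat G = \{\Psi < 0\}$ and completes the proof.
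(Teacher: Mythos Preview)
Your proof is correct and follows the paper's approach: reduce via Lemma~\ref{ker_vs_preker} to the identity $\widehat G = \{\Psi < 0\}$, and handle the inclusion $\widehat G \subseteq \{\Psi < 0\}$ exactly as the paper does (obtain $\Psi \le 0$ on an open neighborhood and invoke the hypothesis $\{\Psi \le 0\}^\circ = \{\Psi < 0\}$). For the reverse inclusion the paper works a bit harder---it passes to a compact subset of $\{\Psi < 0\}$ and runs a Dini-type argument on the decreasing sequence $\{\Phi_k^*\}$---whereas your observation that $\{\Psi < 0\} = \bigcup_{k}\{\Phi_k^* < 0\}$ together with the defining property of the usc regularization yields the same conclusion in one line.
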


\begin{proof}
Notice that, by Lemma~\ref{ker_vs_preker}, it suffices to show that, 
for the pre-kernel, it holds 
\[
\preker \{\psi_j<0\}_{j\geq1}
=\{ \Psi < 0\}.
\]
Let $z_0$ be contained in the pre-kernel 
\[
\preker \{\psi_j<0\}_{j\geq1} 
= \bigcup_{k \geq 1}\big( \bigcap_{j\geq k} \{\psi_j <0\}\big)^\circ.
\]
Recall that such an element $z_0$ exists due to the tameness of the 
sequence. Then there is an index $k_0$ such that the open ball 
$B:=B_R(z_0)$ is contained in $\{\psi_j <0\}$ for every $j \geq k_0$. 
Since $\psi_j<0$ on $B$ for every $j\geq k_0$, we have that 
$\sup_{j\geq k_0} \psi_j \leq 0$ on $B$. 
Hence,  $\big(\sup_{j\geq k_0}\psi_j\big)^* \leq 0$ on $B$. Since 
$k \mapsto \big(\sup_{j\geq k}\psi_j\big)^*$ is decreasing for 
$k\geq1$, it holds for any $z \in B$ that
\[
\Psi(z) = \inf_{k\geq 1}\big(\sup_{j\geq k}\psi_j\big)^*(z) 
= \inf_{k\geq k_0}\big(\sup_{j\geq k}\psi_j\big)^*(z) \leq 0.
\] 
Therefore, $B$ lies in $\{\Psi\leq 0\}$. But then $z_0$ lies inside the 
interior of $\{\Psi\leq 0\}$. By assumption, 
$\{ \Psi \leq 0\}^\circ = \{ \Psi < 0\}$, so $z_0 \in \{ \Psi < 0\}$. 
Since $z_0$ was arbitrarily chosen from the pre-kernel, 
we conclude that the pre-kernel of 
$\{\psi_j<0\}_{j\geq1}$ is contained in $\{\Psi< 0\}$.
\smallskip

Now let us assume that $w_0 \in \{\Psi<0\}$. 
Since $\Psi$ is upper semi-continuous, 
the set $\{\Psi <0\}$ is open and we can find a ball $B_r(w_0)$ fully 
contained in $\{\Psi <0\}$. Define by $K$ the closure of a slightly 
smaller ball $B_s(w_0)$, where $s<r$. Since $K$ is compact and 
$\Psi$ is upper semi-continuous, the function $\Psi$ attains 
a maximum on $K$, so there is a real number $M$ such that
\[
\Psi < M < 0 \ \text{on}\ K.
\]
Since $k \mapsto \big(\sup_{j\geq k}\psi_j\big)^*$ is 
a decreasing sequence of 
upper semi-continuous functions converging to $\Psi$, and since 
$K$ is compact, there is an index $k_1\geq1$ such that 
$\big(\sup_{j\geq k}\psi_j\big)^* <M$ for every $k \geq k_1$. But this means 
that for $z \in K$ and every $j \geq k \geq k_1$ we have
\[
\psi_j(z) \leq \sup_{j\geq k}\psi_j(z) 
\leq \big(\sup_{j\geq k}\psi_j\big)^*(z) < M <0.
\]
Hence, $K$ lies in $\{\psi_j<0\}$ for each $j \geq k_1$, so $w_0$ has to be 
an interior point of $\bigcap_{j\geq k_1} \{\psi_j<0\}$. Therefore, $w_0$ lies 
in the pre-kernel of $\{\psi_j<0\}_{j \geq 1}$ according to its definition. 
Since $w_0$ was chosen arbitrarily from $\{\Psi<0\}$, the whole set 
$\{\Psi<0\}$ is contained in the pre-kernel of $\{\psi_j<0\}_{j \geq 1}$. 
This completes the proof. 
\end{proof}
\bigskip

\begin{remark} 
The condition $\{ \Psi \leq 0\}^\circ = \{ \Psi < 0\}$ 
in Theorem~\ref{prop-kernel-def-fct} is equivalent to
$\partial \{ \Psi \leq 0\} = \{\Psi=0\}$. It naturally occurs, for instance, 
if $\Psi$ is $\mathcal{C}^1$-smooth and $\nabla \Psi(z) \neq 0$ for 
any boundary point of $\{\Psi<0\}$ in $D$, or if $\Psi$ is 
strictly plurisubharmonic. Also notice that, if all the $\psi_j$'s are 
plurisubharmonic, then 
$\Psi=\inf_{k\geq1} \big(\sup_{j \geq k} \psi_j\big)^*$ 
is plurisubharmonic as well.
\end{remark}
\medskip

Notice that, if $\{\psi_j\}_{j\geq1}$ converges locally uniformly to $\Psi$, 
then for \emph{any} subsequence $\{\psi_{j_\ell}\}_{\ell\geq1}$, the function 
$\inf_{k\geq1} \big(\sup_{\ell \geq k} \psi_{j_\ell}\big)^*$ equals
$\Psi$. Therefore, we can extend the previous result:

\begin{corollary} 
Under the assumptions of Theorem~\ref{prop-kernel-def-fct}, 
the sequence of pointed domains $\{(\{\psi_j>0\},p_j)\}_{j\geq1}$ 
tamed at $\hat p$ converges in the sense of Carath\'eodory to 
$(\{\Psi<0\},\hat{p})$, if $\{\psi_j\}_{j\geq1}$ converges 
locally uniformly to $\Psi$.
\end{corollary}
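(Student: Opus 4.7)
The plan is to derive the corollary directly from Theorem~\ref{prop-kernel-def-fct} by applying that theorem to every subsequence and observing that all the resulting kernels coincide with $(\{\Psi<0\}, \hat p)$. Since convergence in the sense of Carath\'eodory is defined exactly by the requirement that every subsequence share the same kernel with the original sequence, it suffices to fix an arbitrary subsequence $\tau = \{(\{\psi_{j_\ell} < 0\}, p_{j_\ell})\}_{\ell \geq 1}$ and prove that $\Ker_{\hat p}(\tau) = (\{\Psi < 0\}, \hat p)$.

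The key preliminary step is the observation recorded just before the statement: the function
\[
\Psi_\tau(z) := \inf_{k\geq 1}\bigl(\sup_{\ell \geq k} \psi_{j_\ell}\bigr)^*(z)
\]
associated with $\tau$ equals $\Psi$. Under the hypothesis that $\psi_j \to \Psi$ locally uniformly, the limit $\Psi$ is itself upper semi-continuous (as a locally uniform limit of upper semi-continuous functions), and on every compact set the functions $\sup_{\ell \geq k} \psi_{j_\ell}$ converge uniformly to $\Psi$ as $k \to \infty$. A direct estimate of the form $\Psi \leq \bigl(\sup_{\ell \geq k} \psi_{j_\ell}\bigr)^* \leq \Psi + \varepsilon_k$ on compacts, with $\varepsilon_k \to 0$, shows that the decreasing sequence $k \mapsto \bigl(\sup_{\ell \geq k} \psi_{j_\ell}\bigr)^*$ converges pointwise to $\Psi$, so its infimum equals $\Psi$.

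Next I would verify that the hypotheses of Theorem~\ref{prop-kernel-def-fct} transfer from the original sequence to $\tau$. Tameness at $\hat p$ is clearly inherited by subsequences. Since $\Psi_\tau = \Psi$, the connectedness of $\{\Psi_\tau < 0\}$, the containment $\hat p \in \{\Psi_\tau < 0\}$, and the identity $\{\Psi_\tau \leq 0\}^\circ = \{\Psi_\tau < 0\}$ all follow immediately from the corresponding assumptions on $\Psi$. Applying Theorem~\ref{prop-kernel-def-fct} to $\tau$ therefore yields $\Ker_{\hat p}(\tau) = (\{\Psi_\tau < 0\}, \hat p) = (\{\Psi < 0\}, \hat p)$, independently of the subsequence, which completes the argument.

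The main technical obstacle lies in the first step, namely carefully handling the upper semi-continuous regularization $(\cdot)^*$ when passing through the supremum $\sup_{\ell \geq k}$; one must confirm that local uniform convergence propagates through these operators in a monotone way. Once this is in place, the rest of the proof is a clean invocation of the previous theorem and the definition of Carath\'eodory convergence.
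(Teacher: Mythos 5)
Your proposal is correct and follows essentially the same route as the paper, which derives the corollary by noting (in the remark immediately preceding it) that local uniform convergence forces $\inf_{k\geq1}\bigl(\sup_{\ell\geq k}\psi_{j_\ell}\bigr)^*=\Psi$ for every subsequence and then applying Theorem~\ref{prop-kernel-def-fct} to each subsequence. Your write-up merely supplies the $\varepsilon_k$-squeeze details that the paper leaves implicit, so no further comment is needed.
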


\section{Final remarks}

Notice that our investigations of the Carath\'eodory kernel convergence 
differ from that of the Hausdorff convergence of the sequences of
compact subsets.  Our analyses do not depend upon any particular
distance concepts.  In this regard, it may be interesting to investigate
where there is an appropriate distance inducing the Carath\'eodory kernel 
convergence.
\medskip

At the final stage of this writing, the authors became aware of 
\cite{BRY}, which also 
studied the concept of Carath\'eodory kernel convergence.  
However, we realized that the main goal and the
analyses differ from ours. In their article \cite{BRY}, the authors gave characterizations on the 
existence of the kernel convergence using the harmonic measures, 
whereas, in this article, we restrict to a purely topological characterization 
of the kernel convergence and relate it to families of biholomorphic 
mappings for one and higher dimensions.

\backmatter

\section*{Declarations}

This work was supported by the National Research Foundation (NRF) grant
(No. 2023R1A2C1007227) and by Samsung Science and Technology 
Foundation under Project Number SSTF-BA2201-01, of The Republic of
Korea. 

\bibliography{sn-bibliography}

\end{document}